\documentclass[11pt]{article}

%---- Paquetes

\usepackage{latexsym,amsmath,amsthm,amssymb,amscd}
%\usepackage{showkeys}
%\usepackage[usenames]{color}
%\usepackage[latin1]{inputenc}
%\usepackage{times}
%\usepackage{epsf}

%---- Fuentes

\newfont{\bb}{msbm10 at 12pt}

%---- Definitions

\def\r{\hbox{\bb R}}

\def\s{\hbox{\bb S}}

\def\amb{\mathcal{N}}

%---- Comandos

\newcommand{\derv}[2]{\dfrac{d #1}{d #2}}

\newcommand{\set}[1]{\left\{#1\right\}}
\newcommand{\metag}[2]{g\left( #1 , #2 \right) }
\newcommand{\meta}[2]{\langle #1 , #2 \rangle }

\newcommand{\hmcf}{H_{\phi}}
\newcommand{\camb}{\overline{\nabla}}

%---- Tamanho del Documento

\topmargin 0cm \textheight = 40\baselineskip \textwidth 16cm \oddsidemargin 0.3cm
\evensidemargin 0.4cm

%---- Numeracion Ecuaciones

\numberwithin{equation}{section}

\begin{document}

%---- Entorno Ecuaciones

\theoremstyle{plain}\newtheorem{lemma}{Lemma}[section]
\theoremstyle{plain}\newtheorem{definition}{Definition}[section]
\theoremstyle{plain}\newtheorem{theorem}{Theorem}[section]
\theoremstyle{plain}\newtheorem{proposition}{Proposition}[section]
\theoremstyle{plain}\newtheorem{remark}{Remark}[section]
\theoremstyle{plain}\newtheorem{corollary}{Corollary}[section]

\begin{center}
\rule{15cm}{1.5pt} \vspace{.6cm}

{\Large \bf Halfspace type Theorems for Self-Shrinkers} \vspace{0.4cm}

\vspace{0.5cm}

{\large Marcos P. Cavalcante$\mbox{}^{\dag}$\footnote{The author is partially supported by CNPq-Brazil.}, Jos$\acute{\text{e}}$ M. Espinar$\mbox{}^{\ddag}$\footnote{The author is partially supported by Spanish MEC-FEDER Grant MTM2013-43970-P and CNPq-Brazil.}}\\
\vspace{0.3cm} 

\noindent $\mbox{}^{\dag}$ Universidade Federal de Alagoas\\ e-mail: marcos@pos.mat.ufal.br\vspace{0.2cm}

\noindent $\mbox{}^{\ddag}$Instituto Nacional de Matem\'{a}tica Pura e Aplicada\\ e-mail: jespinar@impa.br

\rule{15cm}{1.5pt}
\end{center}

\vspace{.5cm}

\begin{abstract}
In this short paper we extend the classical Hoffman-Meeks Halfspace Theorem \cite{DHofWMee90} to self-shrinkers, that is: 
\begin{quote}
{\it Let $P $ be a hyperplane passing through the origin. The only properly immersed self-shrinker $\Sigma$ contained in one of the closed half-space determined by $P$ is $\Sigma = P$.}
\end{quote}

Our proof is geometric and uses a catenoid type hypersurface discovered by Kleene-Moller \cite{SKleNMol14}. Also, using a similar geometric idea, we obtain that the only complete self-shrinker properly immersed in an closed cylinder  $ \overline{B ^{k+1} (R)} \times \r^{n-k}\subset \mathbb R^{n+1}$,  for some $k\in \{1, \ldots ,n\}$ and radius $R$, $R \leq \sqrt{2k}$, is the cylinder $\s ^k (\sqrt{2k}) \times \r^{n-k}$. We also extend the above results for $\lambda -$hypersurfaces.
\end{abstract}

\section{Introduction}

The classical Halfspace Theorem for minimal surfaces in $\r ^3$ asserts:

\begin{quote}
{\bf Theorem \cite{DHofWMee90}:} {\it A connected, proper, possibly branched, nonplanar minimal surface $\Sigma$ in $\r ^3$ is not contained in a halfspace.}
\end{quote}

The proof of the above result is a clever use of catenoids in $\r ^3$ and the Maximum Principle. We should point out that the Halfspace Theorem is not true for minimal hypersurfaces in $\r ^n$, $n \geq 4$, since the behavior at infinity of catenoids in $\r ^n$, $n\geq 4$, is quite different from catenoids in $\r ^3$.  

Since then, many other generalizations have been made, see \cite{LMaz13,HRosFSchJSpr13} and reference therein for recents works on the subject. The above cited results do not apply in our situation since they focus on surfaces. Here, we will work in any dimension.

\subsection{Self-shrinkers in $\r ^{n+1}$}

Let $X: (0,T) \times \Sigma \to \r ^{n+1}$ be a one parameter family of smooth hypersurfaces moving by its mean curvature, that is, $X$ satisfies
$$ \derv{X}{t} = - H \, N  $$where $N$ is the unit normal along $\Sigma _t = X(t, \Sigma)$ and $H$ is its mean curvature, here, $H$ is the trace of the second fundamental form. With this convention, if $\Sigma _t$ is oriented by the outer normal $N$, then $\Sigma _t$ is mean convex provided $H(\Sigma _t) \leq 0$. 

Self-similiar solutions to the mean curvature flow are a special class of solutions, they correspond to solutions that a later time slice is scaled (up or down depending if it is expander or shrinker) copy of an early slice. In terms of the mean curvature, $\Sigma$ is said to be a self-similar solution if, with the convention above, it satisfies the following equation
\begin{equation}\label{Eq:Selfsimilar}
H = c \, \meta{x}{N},
\end{equation}where $c=\pm \frac 1 2$, $x$ is the position vector in $\r ^{n+1}$ and $\meta{\cdot }{ \cdot }$ is the standard Euclidean metric. Here, if $c= -\frac 1 2$ then $\Sigma$ is said a {\bf self-shrinker} and if $c=+\frac 1 2$ then $\Sigma $ is called {\bf self-expander}.

First, we extend the Hoffman-Meeks Halfspace Theorem for self-shrinkers in any dimension. Our proof is geometrical and uses a catenoid type hypersurface discovered by Kleene-Moller \cite{SKleNMol14}. 

\begin{theorem}\label{Th:HalfSpace}
Let $P $ be a hyperplane passing through the origin. 
The only properly immersed self-shrinker contained in one of the closed half-space determined by $P$ is $\Sigma = P$.
\end{theorem}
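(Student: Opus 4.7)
The plan is to argue by contradiction, splitting into two cases according to whether $\Sigma$ meets $P$ or not. The main analytic tool throughout will be the elliptic strong maximum principle for the self-shrinker equation, in the following form. At a tangential contact point $p$ of two self-shrinkers $\Sigma_1,\Sigma_2$ with a common unit normal $N$, the identity $H_i(p) = -\tfrac{1}{2}\meta{p}{N}$ forces their mean curvatures to coincide automatically. Hence $\Sigma_1$ and $\Sigma_2$ are tangent solutions of the same quasilinear elliptic PDE, and whenever one lies on a definite side of the other in a neighborhood of $p$ they must coincide near $p$; real-analyticity and connectedness then propagate this to global coincidence of connected components.

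\emph{Step 1 (touching case).} Since $P$ passes through the origin, $P$ is itself a self-shrinker ($H_P=0$ and $\meta{x}{N_P}=0$). Assuming $\Sigma\subset\{x_{n+1}\geq 0\}$ with $\Sigma\neq P$ and $\Sigma\cap P\neq\emptyset$, I would choose $p\in\Sigma\cap P$; the height function $x_{n+1}|_\Sigma$ attains its minimum value $0$ at $p$, so $T_p\Sigma=T_pP$ with $N_\Sigma(p)=\pm e_{n+1}$, and $\Sigma$ lies on one side of $P$ near $p$. The maximum principle above then forces $\Sigma\equiv P$ on a neighborhood of $p$, and connectedness of $\Sigma$ gives $\Sigma = P$ globally, a contradiction.

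\emph{Step 2 (strict separation).} It remains to exclude $\Sigma\subset\{x_{n+1}>0\}$, and this is where the Kleene-Moller catenoid $K$ enters: a properly immersed, rotationally symmetric self-shrinker in $\r^{n+1}$ whose axis passes through the origin and whose two unbounded ends escape to infinity. Rotations of $\r^{n+1}$ fixing the origin preserve the self-shrinker equation, so every $R(K)$ with $R\in SO(n+1)$ is again a self-shrinker; translations, on the other hand, are forbidden because the position vector $x$ appears explicitly in the equation. The plan is to deform $K$ through a continuous family of rotated (and possibly waist-rescaled within the KM family) copies $K_s$, arranged so that $K_s$ is initially disjoint from $\Sigma$ and so that at some critical parameter $s^*$ the surfaces $K_{s^*}$ and $\Sigma$ first touch tangentially at an interior point $q$, with $\Sigma$ locally on one side. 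The maximum principle then yields $\Sigma=K_{s^*}$, which is absurd because any rotated KM catenoid meets $\{x_{n+1}<0\}$ (its waist is centered at $0$).

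\emph{Main obstacle.} The crux is constructing the sliding family $\{K_s\}$ and verifying the first-contact claim. In the classical Hoffman-Meeks argument monotonicity comes from vertical translation of a catenoid, but this move is not available here; monotonicity must instead be extracted from admissible motions (rotations about the origin, possibly waist-radius variation within the KM family) together with the unbounded ends of $K$. A further subtlety is that a priori $\Sigma$ could accumulate on $P$ at infinity even when $\Sigma\cap P=\emptyset$, so I expect the properness of $\Sigma$, combined with the precise asymptotic geometry of $K$'s ends, to be essential in ensuring that the first tangential contact occurs at a finite interior point rather than escaping to infinity.
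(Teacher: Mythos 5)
There is a genuine gap: your Step 2, which is the heart of the theorem, is never actually carried out --- you yourself flag the construction of the sliding family $\{K_s\}$ and the first-contact argument as an unresolved ``main obstacle.'' Moreover, the premise that forces you into this impasse, namely that ``translations are forbidden because the position vector appears in the equation,'' is exactly where the proposal goes astray. The comparison (tangency) principle does not require the barrier to be a self-shrinker; it only requires a one-sided inequality on the weighted mean curvature $H_\phi = H + \tfrac12\meta{x}{N}$ at a tangential contact with consistent normals. The paper's proof translates the Kleene--M{\o}ller \emph{half}-catenoids $\mathcal C_\theta$ (surfaces with boundary, contained in $\{x_{n+1}\le 0\}$, asymptotic to a cone) vertically: $\mathcal C_{\theta,s}=\mathcal C_\theta + s\,e_{n+1}$ is no longer a self-shrinker, but one computes $H_\phi(\mathcal C_{\theta,s})=\dfrac{s\,u'_\theta}{(1+(u'_\theta)^2)^{1/2}}>0$, which is all the comparison with $H_\phi(\Sigma)=0$ needs. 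With translation available, the sweep is done in the KM parameter $\theta$ at a fixed height $s$: as $\theta\to 0$ the $\mathcal C_{\theta,s}$ tend to a half-cylinder, as $\theta\to\infty$ to the plane $P_s$, their boundary circles (radius $u_\theta(0)<\sqrt{2(n-1)}$) stay in a compact slab near the axis that properness keeps disjoint from $\Sigma$, and the conical asymptotics prevent the contact from escaping to infinity; hence a finite first interior tangency occurs and contradicts the maximum principle. Rotations about the origin plus waist variation, your proposed substitute, give no such monotone sweep, and you offer no argument that they do.

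Two further points. First, your case split is incomplete as stated: besides $\Sigma\cap P\neq\emptyset$ and strict separation ``at infinity,'' you must rule out the height function attaining a positive minimum $h_0>0$ at an interior point; the paper does this by comparing with the plane $P_{h_0}$, which again is \emph{not} a self-shrinker but has $H_\phi(P_{h_0})=h_0/2>0$ --- another instance your ``two tangent self-shrinkers'' formulation of the maximum principle cannot handle. Ruling out an attained minimum is also what guarantees that $\Sigma$ approaches a limit plane $P_t$ only at infinity, which is needed both to place the boundary of the translated half-catenoid safely and to ensure the sweep in $\theta$ actually reaches a contact before degenerating to $P_s$. Second, the surface you describe (a complete two-ended catenoid with waist centered at the origin) is not the object used here; the relevant KM example is the embedded half-catenoid with boundary on the separating hyperplane, and managing that boundary via properness is an essential part of the argument.
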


Moreover, using a stability argument and the Maximum Principle, following the ideas of \cite{JEspRo09}, we are able to extend the above result to Halfspace type theorems for properly immersed self-shrinkers contained in a cylinder. Specifically:

\begin{theorem}\label{Th:Sphere}
The only complete self-shrinker properly immersed in an 
closed cylinder  $ \overline{B ^{k+1} (R)} \times \r^{n-k}\subset \mathbb R^{n+1}$,  for some $k\in \{1, \ldots ,n\}$ 
and radius $R$, $R \leq \sqrt{2k}$, is the cylinder $\s ^k (\sqrt {2k}) \times \r^{n-k}$.
\end{theorem}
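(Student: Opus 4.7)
The plan is to exploit the drift Laplacian $L := \Delta_\Sigma - \tfrac{1}{2}\meta{x}{\nabla_\Sigma \cdot\,}$ naturally attached to self-shrinkers, applied to the squared distance to the cylinder axis $\rho^2 := x_1^2 + \cdots + x_{k+1}^2$. The hypothesis $\rho^2 \leq R^2 \leq 2k$ combined with the self-shrinker equation should make $\rho^2$ an $L$-subsolution on $\Sigma$; since $L$ is symmetric with respect to the Gaussian weight $e^{-|x|^2/4}$ and properly immersed self-shrinkers have finite weighted area, an integration-by-parts argument should promote this to $\rho^2$ being $L$-harmonic, pinning down the geometry of $\Sigma$ at every point.

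First I would carry out the pointwise computation. Decomposing $N = (N^{(1)}, N^{(2)}) \in \r^{k+1} \times \r^{n-k}$ and applying the standard identity $\Delta_\Sigma f = \mathrm{tr}_{T\Sigma}\,\mathrm{Hess}\, f + H\meta{\nabla f}{N}$ for the restriction of the ambient function $f = \rho^2$, the term linear in $H$ cancels against the drift term via the self-shrinker equation $2H + \meta{x}{N} = 0$, leaving the clean identity
$$
L\rho^2 \;=\; 2(k+1) - 2|N^{(1)}|^2 - \rho^2.
$$
Since $|N^{(1)}|^2 \leq 1$ and $\rho^2 \leq R^2 \leq 2k$ by hypothesis, this gives $L\rho^2 \geq 0$ on $\Sigma$, with equality iff both $N^{(2)} = 0$ (i.e.\ $N \perp \r^{n-k}$) and $\rho^2 = 2k$ hold.

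Next I would integrate against $e^{-|x|^2/4}$. The operator factors as $L f \cdot e^{-|x|^2/4} = \mathrm{div}_\Sigma(e^{-|x|^2/4}\nabla_\Sigma f)$, so for a smooth radial cutoff $\eta_r$ supported in $B_{2r}$, equal to $1$ on $B_r$, with $|\nabla \eta_r| \leq C/r$, Stokes' theorem yields
$$
\int_\Sigma \eta_r\, L(\rho^2)\, e^{-|x|^2/4}\, d\mu \;=\; -\int_\Sigma e^{-|x|^2/4}\meta{\nabla_\Sigma \eta_r}{\nabla_\Sigma \rho^2}\, d\mu.
$$
Because $|\nabla_\Sigma \rho^2| \leq 2\sqrt{2k}$ is bounded and properly immersed self-shrinkers have polynomial Euclidean volume growth (Ding--Xin, Cheng--Zhou), the weighted area $\int_\Sigma e^{-|x|^2/4}\,d\mu$ is finite and the right-hand side is $O(1/r)$. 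Passing to the limit and using monotone convergence on the left forces $L\rho^2 \equiv 0$, and the equality case of the pointwise inequality then gives $\rho^2 \equiv 2k$ and $N^{(2)} \equiv 0$ on $\Sigma$. The first identity excludes $R < \sqrt{2k}$ and confines $\Sigma$ to $\s^k(\sqrt{2k}) \times \r^{n-k}$; the second says $\Sigma$ is tangent to this cylinder everywhere. As $\Sigma$ and the cylinder have the same dimension and $\Sigma$ is properly immersed and complete, each connected component of $\Sigma$ is simultaneously open and closed in the (connected) cylinder, hence covers it.

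The main obstacle is the integration-by-parts step, where the hypothesis of proper immersion enters essentially: it is exactly what supplies the polynomial area growth needed to make the weighted area finite and to make the boundary terms on $\partial(\Sigma \cap B_r)$ vanish in the limit. Without properness, the global integral identity could fail and the argument would break down, mirroring the role that properness plays in the catenoid-barrier proof of Theorem~\ref{Th:HalfSpace}.
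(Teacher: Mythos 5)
Your proposal is correct, but it takes a genuinely different route from the paper. The paper's proof is geometric and split in two: for $k=n$ properness gives compactness and one slides concentric spheres $\s^n(R')$ as barriers; for $k<n$ it proves (Lemma~\ref{Lem:Stable}) that $C^k_R$ is unstable, uses the test function $\prod\cos(\pi t_i/r)$ to build compactly supported deformations $\Sigma_s$ of the cylinder with strictly negative weighted mean curvature, and, when ${\rm dist}(\Sigma,C^k_R)=0$ is not attained, translates $\Sigma_s$ along the axis toward the escaping minimizing sequence to produce a first contact point, contradicting the Maximum Principle. You instead run an intrinsic Liouville-type argument: the drift identity $L\rho^2=2(k+1)-2|N^{(1)}|^2-\rho^2$ (which checks out with the paper's conventions $H_\phi=H+\tfrac12\meta{x}{N}=0$), the pointwise inequality $L\rho^2\ge 0$ under $\rho^2\le 2k$, and integration by parts against the Gaussian weight, with properness entering only through the Ding--Xin/Cheng--Zhou theorem that proper self-shrinkers have polynomial volume growth, hence finite weighted area; equality then forces $\rho\equiv\sqrt{2k}$, which excludes $R<\sqrt{2k}$ and identifies $\Sigma$ with the cylinder when $R=\sqrt{2k}$. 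This is much closer in spirit to the analytic approach of Pigola--Rimoldi \cite{SPigMRim14} mentioned in the paper's remark, and it treats the ball and cylinder cases uniformly while avoiding the delicate ``distance not attained, translate to a contact point near infinity'' step; the trade-off is that it imports a nontrivial external volume-growth result, whereas the paper's barrier/instability proof is self-contained, using only the Maximum Principle and explicit computations. One shared caveat with the paper's own conclusion: what both arguments literally give is that the immersion is a proper local diffeomorphism onto $\s^k(\sqrt{2k})\times\r^{n-k}$, i.e.\ a covering (nontrivial finite covers being possible only for $k=1$), which is the usual level of precision for the statement $\Sigma=\s^k(\sqrt{2k})\times\r^{n-k}$.
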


\begin{remark}
An analytical proof of Theorem \ref{Th:HalfSpace} is done in \cite[Theorem 19]{SPigMRim14}.
The Theorem \ref{Th:Sphere}  is in the spirit of \cite[Theorem 2]{SPigMRim14}. 
Here, we replace the hypothesis on the boundedness of the mean curvature of
$\Sigma$, $|H|\leq \sqrt {2k}$, by properness on the cylinder. It would be interesting to know if both conditions are equivalent or not. 
\end{remark}

\subsection{Self-similar solutions as weighted minimal surfaces}

It is interesting to recall here that  self-similiar solutions to the mean curvature flow in $\r ^{n+1}$ can be seen as weighted minimal hypersurfaces in the Euclidean space endowed with the corresponding density (c.f. G. Huisken \cite{GHui90} or T. Colding and W. Minicozzi \cite{TColWMin12a,TColWMin12b}). We will explain this in more detail. 

In a Riemannian manifold $(\amb ,g)$ there is a natural associated measure, that is, the Riemannian volume measure $dv_g \equiv dv$. More generally, we can consider Riemannian measure spaces, that is, triples $(\amb , g , m)$, where $m$ is a smooth measure on $\amb$. Equivalently by the Radon-Nikod\'{y}m Theorem we can consider triples $(\amb , g , \phi)$, where $\phi \in C^\infty (\amb)$ is a smooth function so that $dm = e^\phi dv$. The triple $(\amb , g , \phi)$ is called a {\it manifold with density $\phi$}. 

One of the first examples of a manifold with density appeared in the realm of probablity and statistics, the Gaussian Space, i.e., the Euclidean Space endowed with its standard flat metric and the Gaussian density $e^{-\pi |x|^2}$ (see \cite{ACanVBayFMorCRos08,JEsp13} for a detailed exposition in the context of isoperimetric problems). In 1985, D. Bakry and M. \'{E}mery \cite{DBakMEme85}  studied manifolds with density in the context of difussion equations. They introduced the so-called {\it Bakry-\'{E}mery-Ricci tensor} in the study of diffusion processes given by
\begin{equation}\label{Eq:ricd}
 {\rm Ric}_\phi = {\rm Ric} - \camb ^2  \phi , 
\end{equation}where ${\rm Ric}$ is the Ricci tensor associated to $(\amb , g)$ and $\camb ^2$ is the Hessian with respect to the ambient metric $g$. However, manifolds with density appear in many other fields of mathematics.

M. Gromov \cite{MGro03} considered manifolds with density as {\it mm-spaces} and introduced the generalized mean curvature of a hypersurface $\Sigma \subset (\amb ,g , \phi)$ or {\it weighted mean curvature} as a natural generalization of the mean curvature, obtained by the first variation of the weighted area
\begin{equation}\label{Eq:Hd}
H_\phi = H - \metag{N}{\camb  \phi}. 
\end{equation}

\begin{definition}
Let  $\Sigma \subset (\amb , g , \phi)$ be an oriented hypersurface. We say that $\Sigma$ is $\phi -$minimal if and only if the weighted mean curvature vanishes, i.e., $H_\phi (\Sigma) =0$. More generally, an immersed hypersurface $\Sigma$ has constant weighted mean curvature $H_\phi (\Sigma) = H_0$ (see \cite{TColWMin12a,TColWMin12b,GHui90}).
\end{definition}

It is straightforward to check that self-shrinker (resp. self-expander) are weighted minimal hypersurfaces in  $(\r ^{n+1}, \meta{}{} , \phi)$ with density $\phi := -\frac{|x|^2}{4}$ (resp. $\widetilde \phi  := \frac{|x|^2}{4}$). Henceforth, we will denote $\r ^{n+1}_\phi = (\r ^{n+1}, \meta{}{} , \phi)$, where $\phi = -\frac{|x|^2}{4}$.

 \begin{remark}
Recently, Cheng and Wei \cite{CW} introduced the notion of $\lambda$-hypersurfaces in $\r^{n+1}$. 
We say that an oriented hypersurface   $\Sigma \subset \r^{n+1}$ is a \emph{$\lambda$-hypersurface} if it satisfies the equation
\begin{equation*}\label{lambda}
H+\frac 1 2\langle N, x\rangle = \lambda.
\end{equation*}

Note that a $\lambda -$hypersurface is nothing but a constant weighted mean curvature $H_\phi = \lambda$ hypersurface in $\r ^{n+1} _\phi$. 
 \end{remark}

For $\lambda-$hypersurfaces we prove:

\begin{theorem}\label{Th:HalfSpaceLambda}
Set $\lambda \in \r $. Let $P_ \lambda $ be a hyperplane  defined by $\{x_{n+1}=\lambda \}$ . 
The only properly immersed $\lambda$-hypersurface contained in $\{x_{n+1} \geq \lambda \}$  is $\Sigma = P_ \lambda $.
\end{theorem}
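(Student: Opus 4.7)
The plan is to follow the same scheme as the proof of Theorem \ref{Th:HalfSpace}, with Kleene-Moller's self-shrinker catenoid replaced by a catenoid-type $\lambda$-hypersurface. One first observes (by a direct computation) that $P_\lambda$, equipped with the appropriate choice of unit normal, is itself a $\lambda$-hypersurface, so that Theorem \ref{Th:HalfSpaceLambda} is asserting its rigidity within the closed halfspace it bounds.

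I would then split into two cases. If $\Sigma \cap P_\lambda \neq \emptyset$, pick any contact point $p$. Since $\Sigma$ lies in the closed halfspace bounded by $P_\lambda$ and touches $P_\lambda$ at $p$, the two hypersurfaces are tangent at $p$ with $\Sigma$ on one side. Both satisfy the quasilinear elliptic equation $H_\phi = \lambda$, so the geometric form of Hopf's strong maximum principle applies and gives $\Sigma \equiv P_\lambda$ in a neighborhood of $p$. Real-analyticity of the equation together with connectedness of $\Sigma$ then promote this to global coincidence, so $\Sigma = P_\lambda$, as desired. The substantive case is $\Sigma \cap P_\lambda = \emptyset$, i.e., $\Sigma \subset \{x_{n+1} > \lambda\}$ strictly. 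Here the plan is to produce a contradiction by sweeping with catenoid-type $\lambda$-hypersurfaces: construct a $1$-parameter family $\{C^s\}$ of rotationally symmetric $\lambda$-hypersurfaces of catenoid type about the $x_{n+1}$-axis, via phase-plane analysis of the ODE for their profile curve---the analog of Kleene-Moller's analysis, adapted to accommodate the extra constant term $\lambda$. The family is arranged so that each $C^s$ reaches strictly below $P_\lambda$ and varies continuously with $s$. Starting with $C^s$ disjoint from $\Sigma$, deform through the parameter until the first tangential contact with $\Sigma$; properness of $\Sigma$ ensures this happens at a finite $s^*$ and at an interior point. The strong maximum principle then forces $\Sigma \equiv C^{s^*}$, contradicting $\Sigma \subset \{x_{n+1} \geq \lambda\}$ since $C^{s^*}$ crosses $P_\lambda$.

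The main obstacle is the construction of the catenoid family $\{C^s\}$ with the correct asymptotic behavior (one end dipping below, or asymptotic to, $P_\lambda$) together with the set-up of the sweeping argument: the weighted Gaussian ambient $\r^{n+1}_\phi$ is not translation-invariant, so the sliding cannot be performed by vertical translation of a single barrier but must be done within a family of genuine $\lambda$-hypersurfaces parameterized by a geometric quantity (e.g., waist radius or waist height). Once the Kleene-Moller phase-plane analysis is adapted to the $\lambda$-hypersurface ODE---which should be feasible since the $\lambda$-hypersurface equation differs from the self-shrinker equation only by an added constant source term---the remainder of the proof is a verbatim rerun of Theorem \ref{Th:HalfSpace}.
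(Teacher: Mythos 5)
Your overall scheme (sweeping rotationally symmetric barriers to a first interior tangency) is the scheme of Theorem \ref{Th:HalfSpace}, but it is not the route the paper takes for Theorem \ref{Th:HalfSpaceLambda}, and as written it contains a genuine gap: the entire technical content is delegated to the unproved existence of the family $\{C^s\}$ of catenoid-type $\lambda$-hypersurfaces with the required behavior. Kleene--Moller's phase-plane analysis is specific to the self-shrinker equation ($\lambda=0$); for $\lambda\neq 0$ no such classification is cited or available in the paper's references, and exactly the properties you need --- an end reaching strictly below $P_\lambda$, control of the boundary and of the ends so that the first contact with $\Sigma$ is interior and finite, continuity and ``exhaustion'' of the family so that a first contact at some finite $s^*$ actually occurs --- are what would have to be established; asserting that the adaptation ``should be feasible'' does not close this. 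Note also that in the paper's proof of Theorem \ref{Th:HalfSpace} the translated catenoids $\mathcal C_{\theta,s}$ are \emph{not} self-shrinkers: the argument only uses a computation of the sign of their weighted mean curvature, so your insistence that the sweeping family consist of genuine $\lambda$-hypersurfaces replaces a computable one-sided estimate by a much harder existence problem. A smaller but real issue: with the paper's convention $H_\phi = H + \frac12\meta{x}{N}$, the plane $\{x_{n+1}=\lambda\}$ has $H_\phi=\lambda/2$, so the ``direct computation'' in your first step does not give $H_\phi=\lambda$ as stated; whatever normalization is intended has to be fixed consistently before any of the comparisons make sense.

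For contrast, the paper sidesteps the barrier construction entirely: it proves Lemma \ref{Lem:StableLambda}, namely that $P_\lambda$ is unstable for the Jacobi operator $J_\phi$ (using the product-of-cosines test function, since $|A|^2=0$ and ${\rm Ric}_\phi(N)=\frac12$), and then runs the argument of Theorem \ref{Th:Sphere}: if ${\rm dist}(\Sigma,P_\lambda)$ is attained at a finite point one concludes with the maximum principle and plane barriers; otherwise the distance is zero only at infinity, one deforms a compact piece of $P_\lambda$ by the unstable normal variation so that its weighted mean curvature drops strictly below that of $P_\lambda$, and then slides this compact deformed piece \emph{horizontally} toward the direction $v_\infty$ in which $\Sigma$ approaches $P_\lambda$, checking that the extra term $h\meta{v_\infty}{N_s}$ produced by the translation has the favorable sign at a first contact point. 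The realistic way to repair your write-up is to adopt this stability-plus-sliding argument (which, as the paper remarks, also reproves Theorem \ref{Th:HalfSpace}) rather than to construct $\lambda$-analogues of the Kleene--Moller catenoids.
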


\begin{remark}
Note that Theorem \ref{Th:HalfSpaceLambda} is not true for constant mean curvature surfaces in the Euclidean Space. 
\end{remark}

Moreover, we show

\begin{theorem}\label{Th:SphereLambda}
The only complete  $\lambda$-hypersurface  properly immersed in an 
closed cylinder  $ \overline{B ^{k+1} (R)} \times \r^{n-k}\subset \mathbb R^{n+1}$,  for some $k\in \{1, \ldots ,n\}$ 
and radius $R$ satisfying $\frac R 2-\frac k R \leq \lambda$, is the cylinder $\s ^k (R) \times \r^{n-k}$, with
$R=\lambda + \frac 1 2\sqrt{4\lambda^2+8k}.$
\end{theorem}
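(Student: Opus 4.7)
The plan is to adapt the argument for Theorem~\ref{Th:Sphere} by comparing $\Sigma$ with the foliation of $\r^{n+1}\setminus(\{0\}\times\r^{n-k})$ by the cylinders $\Sigma_r:=\s^k(r)\times\r^{n-k}$, $r>0$. First I observe that, with the outward (away-from-axis) unit normal, $\Sigma_r$ has $H=-k/r$ and $\meta{N}{x}=r$, so it is a $\lambda$-hypersurface precisely when $r^2-2\lambda r-2k=0$, i.e.\ when $r=R_0:=\lambda+\tfrac12\sqrt{4\lambda^2+8k}$. Since $f(r):=r/2-k/r$ is strictly increasing on $(0,\infty)$, the hypothesis $f(R)\leq\lambda$ is equivalent to $R\leq R_0$, so $\Sigma\subset \overline{B^{k+1}(R_0)}\times\r^{n-k}$ and it suffices to prove $\Sigma=\Sigma_{R_0}$.

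The key tool is the squared-distance-to-axis function $F:=|x'|^2$ on $\Sigma$, where $x=(x',x'')\in\r^{k+1}\times\r^{n-k}$. Using the standard formula $\Delta_\Sigma F=\mathrm{tr}_\Sigma(\nabla^2 F)+H\meta{N}{\nabla F}$ for the restriction of the Euclidean Hessian together with $H=\lambda-\tfrac12\meta{N}{x}$, a direct computation yields after cancellation
\begin{equation*}
\mathcal L_\Sigma F \;=\; 2(k+1)-2|N'|^2 + 2\lambda \meta{N'}{x'} - |x'|^2,
\end{equation*}
where $\mathcal L_\Sigma:=\Delta_\Sigma-\tfrac12\meta{x}{\nabla_\Sigma\cdot}$ is the drift Laplacian associated to the Gaussian density and $N=(N',N'')$ is the decomposition of the oriented unit normal. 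Substituting the values $|N'|=1$, $\meta{N'}{x'}=R_0$, $|x'|=R_0$ valid on $\Sigma_{R_0}$ and using $R_0^2=2\lambda R_0+2k$, one checks $\mathcal L_\Sigma F\equiv 0$ on $\Sigma_{R_0}$, as it must.

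Since $F\leq R^2$ is bounded above, I would invoke a weak (Omori--Yau type) maximum principle for $\mathcal L_\Sigma$ on $\Sigma$; this is available for properly immersed $\lambda$-hypersurfaces because the ambient Bakry--\'Emery Ricci tensor satisfies ${\rm Ric}_\phi=\tfrac12\meta{\cdot}{\cdot}>0$ and $\Sigma$ is complete. This produces a sequence $p_n\in\Sigma$ with $F(p_n)\to r_\ast^2:=\sup_\Sigma F$, $|\nabla_\Sigma F(p_n)|\to 0$, and $\mathcal L_\Sigma F(p_n)\leq 1/n$. The vanishing of $\nabla_\Sigma F=2(x',0)-2\meta{N'}{x'}N$ together with Cauchy--Schwarz forces $|N'(p_n)|\to 1$ and $\meta{N'(p_n)}{x'(p_n)}\to r_\ast$, the sign of the limit being fixed by the outward orientation at an approximate maximum of $F$ (since $\Sigma$ lies inside the cylinder, the outward-from-$\Sigma$ direction is outward-from-axis there). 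Passing to the limit in the inequality for $\mathcal L_\Sigma F$ yields $r_\ast^2-2\lambda r_\ast-2k\geq 0$, whence $r_\ast\geq R_0$; combined with $r_\ast\leq R\leq R_0$ this forces $r_\ast=R=R_0$.

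The delicate point I expect to be the main obstacle is the passage from the supremum equality $\sup_\Sigma|x'|=R_0$ to the identification $\Sigma=\Sigma_{R_0}$. When the supremum is attained at an interior point $p_\ast\in\Sigma$, $\Sigma$ and $\Sigma_{R_0}$ are tangent at $p_\ast$ with matching orientations, and the strong maximum principle for the quasilinear elliptic $\lambda$-hypersurface equation gives local coincidence, which spreads to the whole of $\Sigma$ by connectedness and analyticity. When the supremum is attained only asymptotically --- a scenario possible because $\Sigma$ may escape to infinity in the $\r^{n-k}$ factor --- one must extract a convergent subsequence of translates of $\Sigma$, identify the smooth limit, and apply the strong maximum principle in the limit geometry. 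The technical subtlety here is that the $\lambda$-hypersurface equation is not translation invariant in the $\r^{n-k}$ direction, so the translated sequence does not consist of $\lambda$-hypersurfaces and the limiting PDE must be derived with care.
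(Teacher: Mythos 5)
Your route is genuinely different from the paper's: the paper proves this theorem by the geometric scheme of Theorem \ref{Th:Sphere} (tangency with the barrier cylinders $C^k_{R'}$ if the distance to the boundary cylinder is attained, and otherwise the instability Lemma \ref{Lem:Stable}, which produces a compact perturbed piece of the cylinder with strictly smaller weighted mean curvature that is then translated along the $\r^{n-k}$ factor to a first contact point), whereas you propose an analytic argument in the spirit of Pigola--Rimoldi via the drift Laplacian applied to $F=|x'|^2$. Your computation of $\mathcal{L}_\Sigma F=2(k+1)-2|N'|^2+2\lambda\meta{N'}{x'}-|x'|^2$ is correct, as is the algebra identifying $R_0=\lambda+\sqrt{\lambda^2+2k}$ and the equivalence of $\tfrac R2-\tfrac kR\le\lambda$ with $R\le R_0$. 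However, the proposal has genuine gaps. First, the maximum principle you invoke is not justified by the reason you give: positivity of the \emph{ambient} ${\rm Ric}_\phi$ plus completeness does not yield an Omori--Yau principle on $\Sigma$. What properness buys (via finite weighted volume/polynomial volume growth) is only the weak maximum principle for $\mathcal{L}_\Sigma$, i.e.\ a maximizing sequence with $\limsup\mathcal{L}_\Sigma F\le 0$, \emph{without} the gradient condition $|\nabla_\Sigma F(p_n)|\to 0$; the full Omori--Yau version needs extra hypotheses (e.g.\ curvature bounds on $\Sigma$) that you do not have. Your argument uses the gradient decay essentially, since without it you cannot conclude $|N'|\to1$ and $\meta{N'}{x'}^2\to r_*^2$, and the pointwise bounds alone only give $r_*\ge\sqrt{2k}$ at best, which is weaker than $R_0$ when $\lambda>0$.

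Second, and more seriously, the sign of $\meta{N'}{x'}$ at the (approximate) maximum of $F$ is \emph{not} fixed by the orientation argument you sketch. At a maximum of $F$ the normal is radial, but nothing forces it to point away from the axis: the cylinder $\s^k(r_1)\times\r^{n-k}$ with $r_1=-\lambda+\sqrt{\lambda^2+2k}$, oriented by the \emph{inward} normal, satisfies $H+\tfrac12\meta{N}{x}=\lambda$ and has $\meta{N'}{x'}=-r_1$ at every point, each of which maximizes $F$. With the wrong sign your limit inequality only yields $r_*^2+2\lambda r_*-2k\ge0$, i.e.\ $r_*\ge r_1$, which is perfectly compatible with $r_*\le R<R_0$ when $\lambda>0$, so no contradiction results; to rule this out you need an additional orientation/convention argument that is missing (this point is delicate in the statement itself and is implicitly handled in the paper by the choice of outward-oriented barriers). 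Third, your final step --- upgrading $\sup_\Sigma|x'|=R_0$ to $\Sigma=\s^k(R_0)\times\r^{n-k}$ when the supremum is not attained --- is explicitly left open, and that is exactly the situation the paper's proof is built to handle: the unstable compactly supported perturbation of the cylinder gives a compact barrier with boundary away from $\Sigma$ and $H_\phi$ strictly below $\lambda$, and translating it along the axis changes $H_\phi$ only through the term $h\meta{v_\infty}{N_s}$, whose sign is controlled, so a first interior contact point gives a contradiction. As it stands, your proposal does not close the noncompact case, which is the heart of the theorem.
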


%%%%%%%%%%%%%
\section{Some examples}

In this section we remind the properties of some important hypersurfaces in  $\r ^{n+1}_{\phi} $ that we will use later. Recall that $\r^{n+1}_\phi \equiv (\r ^{n+1}, \meta{}{} , \phi)$, where $\phi := -\frac{|x|^2}{4}$. Also, we denote
$$ H_{\phi} = H +\frac 1 2\meta{x}{N} .  $$

As we have seen above, weighted minimal hypersurfaces in $\r ^{n+1}_{\phi}$, $H_{\phi} \equiv 0 $, correspond to self-shrinkers in $(\r ^{n+1} , \meta{}{})$.

\subsection{Spheres}

Let $\s ^n (R)$ be the rotationally symmetric $n$-dimensional sphere centered at the origin 
of radius $R$.  Let $N$ denote the outward orientation. 
Then, the usual mean curvature with respect to the outward orientation is $H = -\frac{n}{R}$. 
Moreover, since the position vector and the outward normal point at the same direction, we have 
$\meta{x}{N} =R$. Therefore, 
$$ \hmcf = - \frac{n}{R}+ \frac R 2 ,$$hence, 

\begin{itemize}
\item $\s ^n (R)$ has constant $H_{\phi} =\frac R 2- \frac{n}{R} >0$ for $R>\sqrt{2n}$.
\item $\s ^n (\sqrt{n})$ is a self-shrinker.
\item $\s ^n (R)$ has constant $H_{\phi} =\frac R 2- \frac{n}{R} <0$ for $R<\sqrt{2n}$.
\end{itemize}

\subsection{Hyperplanes}

Let $P_t$, $t \in \r$,  be the hyperplane given by 
$$ P_t := \set{  x_{n+1} =t } $$and we consider the upwards orientation $N_t = e_{n+1}$. 

As hypersurface in the Euclidean Space is a minimal hypersurface, that is, $H=0$. Moreover, the position vector along $P_t$ can be writen as $x := X + t e_{n+1}$, where $X$ is orthogonal to $e_{n+1}$. Thus, $\meta{x}{N_t} = t$ along $P_t $ for all $t \in \r$. So

\begin{itemize}
\item $P_t$ has constant $H_{\phi} =\frac t 2 >0$ for $t>0$.
\item $P_0$ is a self-shrinker.
\item $P_t$ has constant $H_{\phi} =\frac t 2 <0$ for $t<0$.
\end{itemize}

\begin{remark}
Note that at the highest point of the self-shrinker sphere $\s ^n (\sqrt{2n})$, we have that $P_{\sqrt{2n}}$ is 
above $\s ^n (\sqrt{2n})$, they are tangent at one point and both normal, as we have considered here, 
point at the same direction. But this not contradicts the Maximum Principle since 
$\hmcf (P_{\sqrt{2n}}) > \hmcf (\s ^n (\sqrt{2n})) =0$.
\end{remark}

\subsection{Cylinders}

Consider the cylinder centered at the origin given by $C_R^k := \s ^k (R) \times \r^{n-k}$, 
$1 \leq k \leq n$. As we did before, we know that $H(C_ R ^k) = \frac{k}{R}$ and 
$\meta{x}{ N_{k, R}} = R $, where $N_{k,R}$ is the outward orientation. Therefore, 
Therefore, 
$$ \hmcf (C^k_R)= \frac R 2- \frac{k}{R} ,$$hence, 

\begin{itemize}
\item $C^k_R$ has constant $H_{\phi} =\frac R 2- \frac{k}{R} >0$ for $R>\sqrt{2k}$.
\item $C^k_{\sqrt{k}}$ is a self-shrinker.
\item $C^k_R$ has constant $H_{\phi} =\frac R 2- \frac{k}{R} <0$ for $R<\sqrt{2k}$.
\end{itemize}

\subsection{Half Catenoid}

Here, we will describe a rotationally symmetric example that is of capital importance in our work. These are the rotationally symmetric self-shrinkers contained in a halfspace, embedded and with boundary on the hyperplane that defines the halfspace. This example is given by (see \cite[Theorem 3]{SKleNMol14})

\begin{equation}\label{Eq:Catenoid}
\begin{matrix}
\psi_\theta :&  [0 , +\infty ) \times \s ^{n-1} & \to &\r^{n+1} \equiv  \r ^n \times \r \\
               &    (t, \omega) & \to & (u_\theta (t) \omega , -t)
\end{matrix}
\end{equation}where $u_\theta : [0, +\infty ) \to \r^+$ has the following properties:
\begin{enumerate}
\item $u_\theta (t)> \theta \, t$ and $u_\theta (0) < \sqrt{2(n-1)}$.
\item $ \frac{u_\theta (t)}{t} \to \theta $ and $u' _\theta (t) \to \theta $ as $t \to + \infty$.
\item $u_\theta$ is strictly convex and $0< u'_\theta <\theta $ holds on $[0, +\infty)$.
\end{enumerate}

Moreover, its normal vector field is given by 
$$ N_\theta = \frac{1}{(1+(u'_\theta)^2)^{1/2}} (\omega , u' _\theta ) ,$$and, since $\mathcal C _\theta := \psi_\theta ([0,+\infty) \times \s ^{n-1})$ is a self-shrinker for all $\theta >0$, we have 
$$ \hmcf (\mathcal C _\theta ) = 0 .$$

One important observation is the following:

\begin{remark}\label{Remark}
The {\it half-catenoids} $\mathcal C _\theta $ interpolates between the plane 
$P_0 := \set{ x_{n+1} =0 }$ and the half-cylinder $C^{n-1}_{\sqrt{2(n-1)}} \cap \set{x_{n+1} \leq 0}$. Actually, 
\begin{itemize}
\item  $\mathcal C _\theta \to C^{n-1}_{\sqrt{2(n-1)}} \cap \set{x_{n+1} \leq 0}$ as $\theta \to 0$.
\item  $\mathcal C _\theta  \to P_0 $ as $\theta \to +\infty$.
\end{itemize}
\end{remark}

\section{Proof of Theorem \ref{Th:HalfSpace}}

We will argue by contradiction, so assume that $\Sigma \subset \r^{n+1}_{\phi}$ is a properly immersed self-shrinker contained in a halfspace determined by $P_0$ and $\Sigma$ is not $P_0$. We can assume that $\Sigma \subset \set{x_{n+1}\geq 0}$.

First, note that the function $h : \Sigma \to \r$, given by $h(p)= \meta{p}{e_{n+1}}$, can not have a minimum. Otherwise, there would exist a point $p_0$ so that $h_0= h(p_0) \leq h (p)$. This implies that $\Sigma$ and $P_{h_0}$ have a contact point at $p_0$, $\Sigma$ is above $P_{h_0}$ (with respect to the upward orientation) and $\hmcf (\Sigma) < \hmcf (P_{h_0})$. This contradicts the Maximum Principle. 

Therefore, we can assume that $\Sigma$ approaches some hyperplane $P_t$, $t \geq 0$, at infinity. 

%Without lost of generality, we can assume that $t=0$. We will see this along the proof for the case $t=0$. It is not true that translations keep invariant $\hmcf$ but, in our case, translations in the $e_{n+1}$ increases $\hmcf$ of $P_t$, $t>0$, and this is what we use. 

%So, assume that $\Sigma $ approaches $P_0$ at infinity. 

Since $\Sigma $ is proper, there exists $\epsilon >0$ so that $D(\sqrt{2(n-1)})\times [0, t+\epsilon] \cap \Sigma = \emptyset$, where $D(\sqrt{2(n-1)}) \subset P_0$ is the Euclidean $(n-1)-$ball centered at the origin of radius $ \sqrt{2(n-1)}$. 

Now, we translate upwards the family of half-catenoids $\mathcal C _\theta$. We denote
$$ \mathcal C _{\theta ,s} := \mathcal C _\theta + s \, e_{n+1} , $$for $s \geq t$.

One can easily see that the normal $N_{\theta , s}$ along $\mathcal C _{\theta , s}$ 
satisfies $\meta{N_{\theta , s}}{e_{n+1}} >0$ and 
$$ \hmcf (\mathcal C _{\theta ,s}) = \frac{s \, u'_\theta}{(1+(u'_\theta)^2)^{1/2}} >0,$$which is positive along $\mathcal C _{\theta , s}$. 

Therefore, take $s \in (t, t+\epsilon)$, then $\partial \mathcal C _{\theta ,s}$ does not touch 
$\Sigma$ for all $\theta \in (0 , +\infty)$. 
Note that $\mathcal C _{\theta ,s} \to C^{n-1}_{\sqrt{2(n-1)}}\cap \set{ x_{n+1} \leq s}$ 
as $\theta \to 0$ and $\mathcal C _{\theta ,s} \to P_s$ as $\theta \to + \infty$. 
Also, note that $\mathcal C _{\theta, s}$ is not asymptotic to any hyperplane $P_t$, $t \leq s$. In fact, $\mathcal C_{\theta ,s}$ is asymptotic to a cone for $\theta >0$.

Therefore, since $\Sigma $ approaches $P_t$, there exists $\theta _0 $ so that $\mathcal C _{\theta _0 , s} $ has a finite first contact point with $\Sigma$ as $\theta $ increases from $0$. Clearly, both normals point upwards and $\Sigma$ is above $\mathcal C _{\theta _0 , s}$, but $\hmcf (\mathcal C _{\theta _0 , s}) > \hmcf (\Sigma) =0$, which contradicts the Maximum Principle. 

Thus, $\Sigma \equiv P_0$. This finishes the proof.

\section{Proof of Theorem \ref{Th:Sphere}}

We split the proof in two cases: the case of the ball, which is simple, and the case of a non degenerated cylinder, where we use a stability argument following ideas in \cite{JEspRo09}.

\subsection{Self-shirinkers in a ball}

Since $\Sigma$ is proper in $\overline{\mathbb{B}^{n+1} (R)}$, where $\mathbb{B}^{n+1} (R)$ is the Euclidean $(n+1)-$ball centered at the origin of radius $R$, we have that $\Sigma$ is compact.  In particular, there exists $p \in \Sigma$ such that 
$$
 d(p, \s ^n (R)) = {\rm dist}(\Sigma , \s^n(R)) .
$$

Therefore, we may choose $R'\leq R$ such that $\Sigma $ and $\s^n (R')$ are tangent at $p$. 
Since the weighted mean curvature of $\s ^n(R')$ is given by $H_{\phi}=\frac{R'} 2- \frac{n}{R'} \leq 0$, the Maximum Principle implies that $R'= R =\sqrt{2n} $ and $\Sigma = \s ^n(\sqrt {2n})$.

\subsection{Self-shirinkers in cylinders}

We will argue by contradiction. So, assume $\Sigma$ is not $\s ^k (\sqrt{2k})\times \r ^{n-k}$.

We start with an important lemma about the stability of  cylinders as self-shrinkers. 
We recall (see for instance \cite{JEsp13}) that 
the first variation of the weighted area  funcional of an immersed hypersurface $\Sigma$
in $\r^{n+1}_\phi$ is given by  the weighted mean curvature $H_\phi$, 
while  the second variation is given by the following Jacobi operator:
$$
J_{\phi}u=\Delta u-\frac 1 2 \langle\, x \,, \nabla u \rangle+ (|A|^2+{\rm Ric}_\phi(N))u, \quad u\in C^\infty_0(\Sigma).
$$

It is easy to see that $J_\phi$ is selfadjoint with respect to the weighted $L^2 -$norm given by 
$$ (u,v)_{L^2_\phi} = \int _\Sigma uv e^\phi \, dv , \, \, u,v \in C^\infty _0 (\Sigma) .$$

We say that $\Sigma$ is \emph{stable} in $\r^{n+1}_\phi$ if the Jacobi operator $J_\phi$ is nonpositve on $\Sigma$, that is, if the quadratic form $Q_\phi( u,v) = (J_\phi u ,v)_{L^2 _\phi}$ is nonpositive for all $u,v \in C^\infty _0 (\Sigma)$. Otherwise we say that $\Sigma$ is \emph{unstable}. Notice that if there exist a positive constant $c_0$ and a non trivial function 
$u \in C^\infty_0(\Sigma) $ such that $J_\phi u \geq c_0 u$, then $\Sigma$ is unstable. In the latter case, small variations of $\Sigma$ given by $u$ decrease the weighted mean curvature. This is actually what happens with the self-shirinkers cylinders as we see bellow.
 
\begin{lemma}\label{Lem:Stable} 
For any $k\in \{1,\ldots, n-1\}$ and $R>0$, the cylinders $C^k_R\subset \r^{n+1}_\phi$ are unstables hypersurfaces with respect to the Jacobi operator $J_\phi$.
\end{lemma}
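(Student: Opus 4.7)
The plan is to exhibit a compactly supported test function $u$ with $(J_\phi u, u)_{L^2_\phi} > 0$, built from a cutoff along the Euclidean factor of the cylinder. First I would compute the ingredients of $J_\phi$ on $C^k_R = \s^k(R)\times \r^{n-k}$. The principal curvatures are $1/R$ along the sphere and $0$ along $\r^{n-k}$, so $\abs{A}^2 = k/R^2$. Since the ambient is flat and $\phi = -\abs{x}^2/4$, one has $\camb^2 \phi = -\tfrac{1}{2}\meta{\cdot}{\cdot}$, hence ${\rm Ric}_\phi(N) = \tfrac{1}{2}$. Therefore the zeroth-order coefficient of $J_\phi$ is $\abs{A}^2 + {\rm Ric}_\phi(N) = k/R^2 + 1/2 > 0$, which is the source of instability.

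Next, writing points of $C^k_R$ as $(R\omega, z) \in \s^k(R)\times \r^{n-k}$, I pick any cutoff $\eta \in C^\infty_0(\r^{n-k})$ and define $u(R\omega, z) := \eta(z)$. By the product structure of the cylinder, $\nabla u$ is already tangent to $C^k_R$ and equals $\nabla_z \eta$, so $\Delta_{C^k_R} u = \Delta_z \eta$ and $\meta{x}{\nabla u} = \meta{z}{\nabla_z \eta}$. Consequently
$$J_\phi u \;=\; L\eta + \left(\frac{k}{R^2}+\frac{1}{2}\right)\eta, \qquad L\eta := \Delta_z \eta - \tfrac{1}{2}\meta{z}{\nabla_z \eta}.$$
The weighted volume element factors as $e^\phi\, dv = e^{-R^2/4}\, dv_{\s^k(R)} \otimes e^{-\abs{z}^2/4}\, dz$, and $L$ is self-adjoint with respect to the Gaussian measure on $\r^{n-k}$: integration by parts yields $\int_{\r^{n-k}} \eta\, L\eta\, e^{-\abs{z}^2/4}\,dz = -\int_{\r^{n-k}} \abs{\nabla_z \eta}^2 e^{-\abs{z}^2/4}\,dz$. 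Fubini then gives, up to the positive constant ${\rm Vol}(\s^k(R))\, e^{-R^2/4}$,
$$Q_\phi(u,u) \;=\; -\int_{\r^{n-k}} \abs{\nabla_z \eta}^2 e^{-\abs{z}^2/4}\,dz + \left(\frac{k}{R^2}+\frac{1}{2}\right)\int_{\r^{n-k}} \eta^2 e^{-\abs{z}^2/4}\,dz.$$

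To conclude, the hypothesis $k \leq n-1$ ensures $n-k \geq 1$, so the Gaussian measure on $\r^{n-k}$ is a nontrivial finite measure. I take a standard family of radial cutoffs $\eta_M \in C^\infty_0(\r^{n-k})$ with $\eta_M \equiv 1$ on $B(M)$, support in $B(M+1)$, and $\abs{\nabla \eta_M} \leq 2$. Then the potential term tends to the strictly positive constant $(k/R^2+1/2)\int_{\r^{n-k}} e^{-\abs{z}^2/4}\,dz$, while the gradient term is bounded by $4\int_{B(M+1)\setminus B(M)} e^{-\abs{z}^2/4}\,dz \to 0$ as $M\to\infty$. Hence $Q_\phi(u_M, u_M) > 0$ for $M$ large, contradicting stability. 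The only delicate point is pinning down sign conventions in $\camb^2\phi$ and in the self-adjointness of $L$; once those are fixed, the argument is driven by the simple fact that, on a nontrivial Euclidean factor, an almost-constant function has arbitrarily small weighted Dirichlet energy but a fixed positive weighted $L^2$ mass.
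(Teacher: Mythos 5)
Your proposal is correct, and it proves exactly the stated instability; it differs from the paper's argument mainly in how the test function is handled. Both proofs start from the same computation $\abs{A}^2=k/R^2$, ${\rm Ric}_\phi(N)=\tfrac12$, and both use test functions depending only on the $\r^{n-k}$ factor, so the instability comes from the positive potential $\tfrac12+\tfrac{k}{R^2}$ combined with the ability to spread the test function along the Euclidean directions at negligible cost. The paper makes this quantitative pointwise: it takes the explicit product of cosines $u(\bar t)=\prod_i\cos(\pi t_i/r)$ on a large cube, checks that the drift term $-\tfrac12\meta{x}{\nabla u}$ has a favorable sign there, and obtains the differential inequality $J_\phi u\geq c\,u$ with $c>0$ for $r$ large; this pointwise control (and the explicit deformation it generates) is then reused in the proof of Theorem \ref{Th:Sphere}, where one needs a compactly supported variation of $C^k_R$ whose weighted mean curvature is strictly negative everywhere and whose normal is explicitly computable. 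Your route instead works at the level of the quadratic form: writing $J_\phi u=L\eta+(\tfrac{k}{R^2}+\tfrac12)\eta$ with $L$ the Gaussian drift Laplacian, integrating by parts against $e^{-\abs{z}^2/4}dz$, and exploiting that this measure is finite, so near-constant cutoffs have vanishing weighted Dirichlet energy but fixed positive weighted $L^2$ mass. This is arguably cleaner and more conceptual (it is essentially the statement that the bottom of the spectrum of the drift Laplacian on Gaussian space is zero, so any strictly positive potential forces $Q_\phi>0$), it avoids any sign analysis of the drift term, and your cutoffs are genuinely $C^\infty_0$, whereas the cosine product extended by zero is only Lipschitz across the boundary of the cube; the trade-off is that your argument yields only the integrated inequality $Q_\phi(u,u)>0$ and not the pointwise inequality $J_\phi u\geq c\,u$ for a nonnegative $u$, which is the form in which the lemma is actually exploited later in the paper.
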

\begin{proof}
For cylinders $C^k_R$ we have
$$
|A|^2 = \frac {k}{R^2} \quad \text {and}\quad {\rm Ric}_\phi(N)=\frac 1 2.
$$

Given $r>0$ we  consider 
$$
u(p,\overline t)= u(\overline t)=
 \prod_{i=1}^{n-k}\cos\Big(\frac \pi r t_i\Big) %\quad p\in \s^{k},\, \,  \overline t = (t_1,\ldots,t_{n-k}) \in \Big(-\frac r 2,\frac r 2 \Big)^{n-k},
$$ 
as a test function, where $p\in \s^{k}$ and $ \overline t = (t_1,\ldots,t_{n-k}) \in \big(-\frac r 2,\frac r 2 \big)^{n-k}$.
Then a direct computation yields 
$$
-\frac 1 2\meta {x}{\nabla  u} = \frac {\pi}{2r}\sum_{i=1}^{n-k} t_i\sin \Big(\frac{\pi}{r}t_i \Big)\prod _{j=1, \, j\neq i }^{n-k} \cos\Big(\frac \pi r t_j\Big)\geq 0,
\quad {\rm for } \, (t_1,\ldots, t_{n-k})\in \Big(-\frac r 2,\frac r 2 \Big)^{n-k},
$$
and $\Delta u  = -\frac{n-k}{r^2}\,\pi^2\, u$.
Thus we get
$$
J_\phi u \geq \Big(\frac 1 2 + \frac {k}{R^2} -\frac{n-k}{r^2}\pi^2  \Big)u.
$$
Finally, we can choose $r>0$ big enough so that 
$$
c=  c(k,R,n,r) := \frac 1 2 + \frac {k}{R^2} -\frac{n-k}{r^2}\pi^2 >0.
$$
%is a positive constant.
This concludes the proof.
\end{proof}

Now fix  $k\in \{1, \ldots ,n-1\}$  and we assume that $\Sigma$ is a hypersurface properly immersed in the closed cylinder $ \overline{B^{k+1} (R)} \times \r^{n-k}\subset \mathbb R^{n+1}$. If $dist (\Sigma, C^k_R)$ is attained at a finite point $p\in\Sigma$, then we can apply the Maximum Principle using as barriers the cylinders $C^k_{R'}$, $R'\leq R$, to get a contradiction. So the distance is not attained at a finite point and without loss of generality we may assume that $dist (\Sigma, C^k_R)=0$.

Let $r>0$ big enough such that the function 
$$ u(p, \bar t) =\prod_{i=1}^{n-k}\cos\Big(\frac \pi r t_i\Big) , $$given in Lemma \ref{Lem:Stable}, satisfies 
$$ J_\phi u \geq c u  $$for some positive constant $c$. 

Now, we consider the compactly supported variation normal of $C^k_R$ given by the vector field $X= u \tilde N$, here $\tilde N$ is the normal along $C^k _R$ and it is given by 
$$ \tilde N (p, \bar t)  = \frac{1}{R}(p, 0) \, , \,\, (p,\bar t) \in \s ^k (R)\times \r ^{n-k} = C^k_R .$$

The family of compact with boundary hypersurfaces associated to such variation is given by the 
normal variation of a peace of $C^k_R$ in the direction of $u$. Namely, 
$$
\Sigma_s=\Big\{(p,\overline t) + \frac{s}{R}u(\overline t)(p,0): 
(p,\overline t)\in \s^k(R)\times \Big(-\frac r 2,\frac r 2 \Big)^{n-k} \Big\},   \quad s\in (-\epsilon, \epsilon) ,
$$for some $\epsilon >0$ small enough.

On the one hand, we should note that $\partial \Sigma _s \subset C^k_R$ and therefore $\partial \Sigma _s \cap \Sigma = \emptyset$ for all $s \in (-\epsilon , \epsilon)$, the unit normal along $\Sigma _s$ pointing outwards. Also, a straightforward computation shows that 
$$ N_s(p,\bar t) = \frac{R}{\sqrt{R^2 +s ^2 \| \tilde \nabla u \|^2}}(p , -\frac{s}{R}\tilde \nabla u (\bar t)) ,$$where $N_s$ is the outward normal along $\Sigma _s$ and $\tilde \nabla u$ denotes the gradient in $\r ^{n-k}$.

Moreover, we know (see \cite{ACanVBayFMorCRos08}) that 
$$H' _\phi (0) = -J_\phi u \leq -c u <0 ,$$
which means that the weighted mean curvature of $\Sigma _s$ is strictly negative, 
i.e., $H^s_\phi (q) < 0 $ for all $q \in \Sigma _s$, for all $s \in (-\epsilon , \epsilon)$. Possibly we must shrink $\epsilon$. 

Since $\Sigma$ is proper we have that $\Sigma_s\cap \Sigma = \emptyset$, $\forall s\in (-\epsilon, 0)$. 
%
%Otherwise, we must have a first contact point for some $\bar s \in (-\epsilon ,0)$ which yields a contradiction with the Maximum Principle. 
%
%
%
On the other hand, since $dist (\Sigma, C^k_R)=0$ and it is not attained we can choose a
sequence $q_j = (p_j, \overline t_j)\in \s^k(R)\times \r^{n-k}$  such that $\lim dist (\Sigma,  q_j)=0$ and $\lim \|\overline q_j\| = \infty$. Consider 
$$ v_j = \frac{q_j}{\| q_j\|} \to (0 , v_\infty) \text{ as } j\to +\infty,$$where $v_\infty \in \s^{n-k-1} \subset \r ^{n-k}$. We can assume that $v_\infty = (0,\ldots , 0 ,1)$, since the problem is invariant under rotations of the Euclidean Space.

The idea here is to translate $\Sigma_s$ in the direction of $v_\infty$ and find a first contact point and so, 
to apply the Maximum Principle at this point to get a contradiction. Let us consider the translated hypersurfaces
$$
\Sigma_{s, h}= \Sigma _s + h v _\infty ,   \quad s\in (-\epsilon, 0) , \, h \geq 0.
$$
 
As we did in Theorem \ref{Th:HalfSpace}, note that the mean curvature and the outward unit normal vector field of $\Sigma_s$ and  $\Sigma_{s, h}$ coincide at the corresponding points. Hence, we can compute the weighted mean curvature $H_\phi ^{s,h}$ of $\Sigma _{s,h}$ as 
$$ H_\phi ^{s,h}(q + h v_\infty) = H_\phi ^s (q) + h \meta{v_\infty}{N_s(q)} , \, q = (p,\bar t) \in \Sigma _s .$$

A straightforward computation (see Lemma \ref{Lem:Stable}) yields that 
$$  \meta{v_\infty}{N_s(q)} = s \frac{\pi  \, u(\bar t)}{r\sqrt{R^2 +\| \tilde \nabla u\|^2}} \tan\left( \frac{\pi}{r}t_{n-k}\right). $$

Since $s<0$, if the first point of tangency $\tilde q+hv_\infty$ occurs for $h>0$, then $t_{n-k}\in (0,r/2)$. If it it occurs for $h<0$, then $t_{n-k}\in (-r/2,0).$ 
In any case we have that $ h \meta{v_\infty}{N_s(q)} <0.$
%
%Since $\lim dist (\Sigma,  (p_j,\overline t_j))= 0$,  we must find a first finite contact point of $\Sigma $ with an interior point $\tilde q\in \Sigma _{s,h}$ for some fixed $s \in (-\epsilon ,0)$ and $h>0$. Note that since we are translating in the $(0,\ldots , 0, 1)$ direction, the $t_{n-k}$ coordinate of the interior point  $\tilde q = (\tilde p ,\tilde  t_1 , \ldots , \tilde t _{n-k}) + h v_\infty$ must be nonnegative, i.e., $\tilde t_{n-k}\geq$. 
%
Therefore,  in the first tangency point we get 
$$ H_\phi ^{s,h}(\tilde q+hv_\infty) \leq H_\phi ^s (\tilde q) <0 ,$$
which gives the desired contradiction by the Maximum Principle. 
Hence, $\Sigma = \s ^k (\sqrt{2k}) \times \r ^{n-k}$.

\section{Proof of Theorem \ref{Th:HalfSpaceLambda} and Theorem \ref{Th:SphereLambda}}

We first show:

\begin{lemma}\label{Lem:StableLambda}
For any $\lambda \in \r $, the hyperplane  $ P_\lambda =\{ x_{n+1} =\lambda \} \subset \r^{n+1}_\phi$ is unstable with respect to the Jacobi operator $J_\phi$.
\end{lemma}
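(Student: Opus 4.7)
The plan is to adapt the test function strategy used in Lemma \ref{Lem:Stable} to the totally geodesic case of the hyperplane $P_\lambda$. First I would collect the geometric ingredients entering $J_\phi$ along $P_\lambda$. Being totally geodesic, $|A|^2 = 0$. For the Bakry--\'{E}mery--Ricci term, $\phi = -|x|^2/4$ has ambient Hessian $\camb ^2 \phi = -\frac{1}{2}\meta{\cdot}{\cdot}$ and the ambient Ricci tensor vanishes, so by \eqref{Eq:ricd} one has $\ricd = \frac{1}{2}\meta{\cdot}{\cdot}$; in particular $\ricd(N) = \frac{1}{2}$. Finally, writing a point of $P_\lambda$ as $x = (\bar x , \lambda)$ with $\bar x \in \r ^n$, the tangential gradient of any $u \in C^\infty _0 (P_\lambda)$ lies in $\r^n \times \set{0}$, so $\meta{x}{\nabla u} = \sum_{i=1}^{n} x_i \partial _i u$ and the constant $\lambda$ does not enter the operator.

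Next I would choose, in direct analogy with Lemma \ref{Lem:Stable}, the test function
$$
u(\bar x) = \prod _{i=1}^{n} \cos\left(\frac{\pi}{r} x_i\right),
$$
supported on the cube $\bar x \in \left(-\frac{r}{2},\frac{r}{2}\right)^n \subset \r ^n \equiv P_\lambda$. A direct computation gives $\Delta u = - \frac{n \pi ^2}{r^2} u$, and by the same sign argument as in Lemma \ref{Lem:Stable},
$$
-\frac{1}{2}\meta{x}{\nabla u} = \frac{\pi}{2r}\sum _{i=1}^{n} x_i \sin\left(\frac{\pi}{r} x_i\right)\prod _{j\neq i}\cos\left(\frac{\pi}{r}x_j\right) \geq 0
$$
on this cube, since $x_i \sin(\pi x_i / r) \geq 0$ for $|x_i| < r/2$. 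Substituting into $J_\phi$ yields
$$
J_\phi u \geq \left(\frac{1}{2} - \frac{n\pi^2}{r^2}\right) u.
$$

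Finally, any $r > \pi \sqrt{2n}$ makes the constant $c := \frac{1}{2} - \frac{n\pi^2}{r^2}$ strictly positive, so $J_\phi u \geq c\, u$ on the support of $u$, and the instability criterion recorded immediately before Lemma \ref{Lem:Stable} applies. I do not expect a serious obstacle here: the content is simply that the positive term $\ricd(N) = 1/2$ provides a lower-order contribution to $J_\phi$ which, on a sufficiently large cube, dominates the (negative) Laplacian eigenvalue, while the drift term $-\frac{1}{2}\meta{x}{\nabla u}$ contributes a nonnegative quantity for this particular test function. The only place where one might pause is checking that shifting the hyperplane by $\lambda$ has no effect; as noted above, the extra component $\lambda e_{n+1}$ of the position vector is normal to $P_\lambda$ and therefore silently drops out of the advection term.
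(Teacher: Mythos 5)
Your proposal is correct and follows essentially the same route as the paper, which likewise takes $|A|^2=0$, ${\rm Ric}_\phi(N)=\tfrac12$, the product-of-cosines test function on a large cube centered at the point of $P_\lambda$ nearest the origin, and chooses $r$ large so that $\tfrac12-\tfrac{n\pi^2}{r^2}>0$. The extra details you supply (the sign of the drift term and the observation that the normal component $\lambda e_{n+1}$ of the position vector drops out) are exactly the computations the paper leaves implicit by referring back to Lemma \ref{Lem:Stable}.
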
 
\begin{proof}
We argue as in Lemma \ref{Lem:Stable}. For hyperplanes $P_\lambda$ we have $
|A|^2 = 0 $ and ${\rm Ric}_\phi(N)=\frac 1 2$. Hence, given $r>0$ we  consider 
$$
u(t_1, \ldots , t_n)=  \prod_{i=1}^{n}\cos\Big(\frac \pi r t_i\Big) , \, (t_1,\ldots,t_{n}) \in \big(-\frac r 2,\frac r 2 \big)^{n},
$$ 
as a test function. Then a direct computation yields 
$$
J_\phi u \geq \Big(\frac 1 2  -\frac{n}{r^2}\pi^2  \Big)u.
$$
Finally, we can choose $r>0$ big enough so that 
$$
c := \frac 1 2 -\frac{n}{r^2}\pi^2 >0.
$$

\end{proof}

Thus, arguing as in Theorem \ref{Th:Sphere}, we can prove Theorem \ref{Th:HalfSpaceLambda} and Theorem \ref{Th:SphereLambda}.

\begin{remark}
Note that we could prove Theorem \ref{Th:HalfSpace} using Lemma \ref{Lem:StableLambda} and the argument in Theorem \ref{Th:Sphere}. Nevertheless, we prefer the proof given here using the Catenoid type hypersurfaces of Kleene-Moller.
\end{remark}

\end{document}